\newtheorem{thm}{Theorem}
\newtheorem{cor}{Corollary}
\newcommand{\cS}{\mathcal S}
\newcommand{\cDS}{\mathcal D^\cS}
\newcommand{\cH}{\mathcal H}
\newcommand{\cM}{\mathcal M}
\newcommand{\WWKL}{\mathsf{WWKL}}
\title{Weakly 2-randoms and 1-generics in Scott sets}
\author[L.\ B.\ Westrick]{Linda Brown Westrick}
\address{Department of Mathematics\\
University of Connecticut\\
Storrs, Connecticut U.S.A.}
\email{westrick@uconn.edu}
\begin{document}

\begin{abstract}
  Let $\mathcal S$ be a Scott set, or even an
  $\omega$-model of $\mathsf{WWKL}$. Then 
for each $A\in \cS$, either there is $X \in \cS$ that is
  weakly 2-random relative to $A$, or 
there is $X\in \cS$ that is 1-generic relative to $A$.
It follows that 
if $A_1,\dots, A_n \in \cS$ are non-computable,
  there is $X \in \mathcal S$ such that each $A_i$ is
  Turing incomparable with $X$, answering
a question
  of Ku\v{c}era and Slaman.  
More generally,
any $\forall\exists$ sentence in the language of 
partial orders that holds in 
$\mathcal D$ also holds in $\cDS$,
where $\cDS$ is the partial order of Turing degrees 
of elements of $\cS$.
\end{abstract}

\maketitle

Ku\v{c}era and Slaman \cite{kuceraslaman} 
showed that for every Scott set $\cS$ and every
non-computable $A \in \cS$, there is $X \in \cS$ such that $X$ and $A$
are Turing incomparable.
Their non-uniform proof hinged
on whether $A$ was a $K$-trivial set.  
Conidis \cite{conidis} generalized this result to require only that
$\cS$ is an $\omega$-model
of $\WWKL$  (meaning that
for every $A\in \cS$, every $\Pi^0_1(A)$ class
\emph{of positive measure} has an element in $\cS$).
Ku\v{c}era and Slaman left open whether,
given non-computable $A, B \in \cS$, one could find $X \in \cS$
incomparable with both.  We answer that question in the affirmative, 
generalizing both of the above results 
(Corollary \ref{thm4}).

This is related to the question, asked by Ku\v{c}era and Slaman, 
of which $\forall\exists$ sentences 
(in the language of partial orders) hold in $\cDS$, the partial 
order induced by $\leq_T$ on the degrees of
$\cS$.  
One direction of the 
proof of the decidability of the
$\forall\exists$ theory of the Turing degrees $\mathcal D$ 
rests on a technical result (see \cite[Theorem II.4.11]{lerman}) 
which we generalize from $2^\omega$
to an arbitrary Scott set $\cS$ in Theorem \ref{thm5}.  
Whenever Theorem \ref{thm5} holds for some $\cS$, then every 
$\forall\exists$ sentence that holds in $\mathcal D$ also 
holds in $\cDS$ (Corollary \ref{cor6}).  
Li and Slaman \cite{liwei} recently showed that
whenever $\cS\subseteq \Delta^0_2$ is a Scott set and $A\in \cS$, 
then there is $G \in \cS$ such that $G$ is 1-generic relative to $A$. 
Theorem \ref{thm5} and Corollary \ref{cor6}
for the case where
$\cS \subseteq \Delta^0_2$ follow immediately.

Theorem \ref{thm3} is the new result which allows the usual proof 
of Theorem \ref{thm5} for $\cS = 2^\omega$ to lift to arbitrary 
$\omega$-models of $\WWKL$.
Corollary \ref{thm4} is also a
special case of Theorem \ref{thm5}.

The author would like to thank Uri Andrews, Joe Miller, Noah Schweber
and Mariya Soskova, with whom some partial results
were obtained by a different method
during the author's visit to
UW-Madison.  This note would not exist without the benefit 
of those discussions.

To prove Theorem \ref{thm3}, we
relativize the following theorem characterizing the 
weakly 2-random ($W2R$) sets in the Martin-L\"of random ($MLR$) sets. 
The relativizations of these notions to a set $A$ are denoted by
$W2R^A$ and $MLR^A$ respectively.

\begin{thm}[Hirschfeldt and Miller, \mbox{see \cite[Thm 5.3.16]{niesbook}}; Downey, Nies, Weber and Yu \cite{dnwy}]\label{hm}
  An $MLR$ set is $W2R$ if and only if it does not compute any
  non-computable c.e. set.
\end{thm}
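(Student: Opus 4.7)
The plan is to prove the two implications separately. The forward direction reduces to a classical Lebesgue-density argument of Sacks, while the backward direction is where the essential work lies.

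For the forward direction, suppose $Z$ is $MLR$ and $W2R$, and, for contradiction, $\Phi^Z = A$ for some Turing functional $\Phi$ and non-computable c.e.\ set $A$. The class $\mathcal{C} = \{X : \Phi^X = A\}$ is $\Pi^0_2$: totality of $\Phi^X$ is $\Pi^0_2$, and, given totality, the coordinatewise condition $\Phi^X(n) = \chi_A(n)$ is $\Delta^0_2$ (the ``yes'' case pairs $\Sigma^0_1(X)$ with the $\Sigma^0_1$ enumeration of $A$; the ``no'' case uses $n \notin A$, which is $\Pi^0_1$), so universal quantification over $n$ yields $\Pi^0_2$. Since $Z \in \mathcal{C}$ and $Z$ is $W2R$, $\mathcal{C}$ has positive Lebesgue measure. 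Then invoke Sacks's theorem: a set computed by a positive-measure class of oracles via a fixed Turing functional is itself computable (the proof picks a cylinder where $\mathcal{C}$ has density close to $1$ and recovers $A(n)$ by majority vote on extensions lying in $\mathcal{C}$). This forces $A$ to be computable, a contradiction.

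For the backward direction, suppose $Z$ is $MLR$ but $Z \in \bigcap_n U_n$ for some $\Pi^0_2$ null class, with $\{U_n\}$ uniformly c.e.\ open and, WLOG, decreasing. Since $Z$ passes every Martin-L\"of test, no tail of $\{U_n\}$ can itself be a Martin-L\"of test, so $\mu(U_n) > 2^{-n}$ for infinitely many $n$. The aim is to construct a non-computable c.e.\ set $A$ with $A \leq_T Z$. The strategy is to couple a Solovay-style monitor of the measure approximations $\mu(U_{n,s})$, which defines $A$ as a c.e.\ set, with the entry-time function $f(n) = \min\{s : Z \in U_{n,s}\}$, which is $\leq_T Z$ and provides a $Z$-computable ``clock'' for deciding membership in $A$. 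Non-computability of $A$ is then forced contrapositively: a computable decision procedure for $A$ would allow one to extract a bona fide Martin-L\"of test that $Z$ fails, contradicting $Z \in MLR$.

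The main obstacle is the backward direction, specifically the simultaneous satisfaction of two competing requirements: the reduction $A \leq_T Z$ demands that enumeration events into $A$ be $Z$-computably detectable, while non-computability of $A$ demands that those events genuinely encode the failure of $\{U_n\}$ to achieve the Martin-L\"of rate $2^{-n}$. Designing a monitor that balances these constraints is the technical heart of the Hirschfeldt-Miller argument, and this is where essentially all the effort will concentrate.
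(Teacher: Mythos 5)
The paper does not prove this statement at all; it is quoted as a known theorem of Hirschfeldt--Miller and Downey--Nies--Weber--Yu, with the proof deferred to \cite[Thm 5.3.16]{niesbook}. So there is no in-paper argument to compare against, and your proposal must be judged on its own. Your forward direction is correct and complete: the class $\{X : \Phi^X = A\}$ is indeed $\Pi^0_2$ when $A$ is c.e.\ (totality is $\Pi^0_2$, agreement with the enumeration of $A$ is an intersection of $\Pi^0_2$ and $\Pi^0_1$ conditions), so a $W2R$ real in it forces positive measure, and the Sacks/majority-vote theorem then makes $A$ computable. Note this half does not even need the $MLR$ hypothesis.

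The backward direction, however, is not a proof but a declaration of intent, and it is precisely the direction that constitutes the content of the theorem. You correctly identify the two competing requirements ($A \leq_T Z$ via the entry-time function versus non-computability via a measure-growth permitting scheme), but you give no construction: no rule for when a number enters $A$, no verification that the permissions sum to a Solovay test that $Z$ passes (which is what makes the $Z$-computation of $A$ converge correctly), and no argument that a computable $A$ would yield an effective selection of indices $n$ with $\mu(U_n)$ small enough to assemble a Martin--L\"of test covering $Z$. The preliminary observation you do make --- that $\mu(U_n) > 2^{-n}$ infinitely often --- is true but does not by itself drive the construction; the difficulty is exactly that the indices where $\mu(U_n)$ is small cannot be found computably, and the c.e.\ set $A$ must be built to exploit that non-effectivity. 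As written, the ``technical heart'' you point to is absent, so the proof of the equivalence is incomplete.
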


\begin{thm}\label{thm3}
  For any Scott set $\cS$ and any $A \in \cS$, either there is $X \in \cS$
  that is weakly 2-random relative to $A$, or there is $X \in \cS$ that
  is 1-generic relative to $A$.
\end{thm}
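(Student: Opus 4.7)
The plan is to combine the $\WWKL$-closure of $\cS$ with the relativized form of Theorem \ref{hm}. First, since $\cS$ is an $\omega$-model of $\WWKL$ and the complement of a universal $\Sigma^0_1(A)$ Martin--L\"of test is a $\Pi^0_1(A)$ class of positive measure, I extract some $X \in \cS$ with $X \in MLR^A$. If $X$ happens to be in $W2R^A$, the first alternative of the dichotomy is already met and we are done.

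Otherwise, applying the relativization of Theorem \ref{hm} to $A$, I obtain a set $W$ that is computably enumerable in $A$, non-$A$-computable, and satisfies $W \leq_T X \oplus A$. Since $\cS$ is closed under Turing reducibility, $W \in \cS$. The remaining task is to build, inside $\cS$ and using only $A$ and $W$, a set $G$ that is 1-generic relative to $A$. The intended construction is a stagewise forcing building $G = \bigcup_s \sigma_s$: at stage $s$, I confront the $s$-th $\Sigma^0_1(A)$ set of strings $V_s$ and extend $\sigma_s$ to some $\sigma_{s+1}$ that either lies in $V_s$ or admits no extension in $V_s$. The $A$-computable enumeration of $W$ acts as a timing device while the non-$A$-computability of $W$ supplies enough "room" to make coherent decisions, producing $G \leq_T A \oplus W$ and therefore $G \in \cS$.

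The main obstacle is carrying out this last step. Knowing only that $W$ is \emph{some} non-$A$-computable c.e.-in-$A$ set does not, in general, allow one to decide from $A \oplus W$ whether a given string has any extension in a $\Sigma^0_1(A)$ set $V_s$; the forcing is not automatically of low complexity relative to $W \oplus A$. I expect the argument to exploit more specific features of $W$ as supplied by the Hirschfeldt--Miller construction---in particular, the way $W$ is encoded in the nested $\Sigma^0_1(A)$ sets whose shrinking measures witness $X \notin W2R^A$---or else to couple the forcing with a further application of $\WWKL$ inside $\cS$ to pull the sequence $(\sigma_s)$ out of an appropriate $\Pi^0_1(A \oplus W)$ class of positive measure. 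Making the resulting forcing compatible with $\cS$'s closure properties while preserving both meeting and strong-avoidance requirements is the subtle point.
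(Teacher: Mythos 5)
Your first two steps match the paper exactly: use $\WWKL$ (or the Scott set property) to place some $X \in \cS$ inside a $\Pi^0_1(A)$ class of $MLR^A$ reals, and, if $X \notin W2R^A$, invoke the relativized Hirschfeldt--Miller theorem to obtain a set $W \leq_T X \oplus A$ that is c.e.\ in $A$ but not computable from $A$. The gap is entirely in your final step, and you are right to flag it as the crux --- but the resolution is not the one you are reaching for. You do not need any special structural information about $W$ coming from the Hirschfeldt--Miller argument, nor a second application of $\WWKL$. What closes the proof is the classical theorem that \emph{every non-computable c.e.\ set computes a 1-generic}, whose proof relativizes verbatim: from any set $W$ that is c.e.\ in $A$ and not $A$-computable, one constructs $G \leq_T W \oplus A$ that is 1-generic relative to $A$. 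Then $G \leq_T W \oplus A \leq_T X \oplus A$ puts $G \in \cS$ by downward closure, as you noted.

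Your diagnosis of why the naive forcing fails is accurate --- $A \oplus W$ cannot in general decide whether a string has an extension in a $\Sigma^0_1(A)$ set $V_e$ --- but the known construction does not try to decide this. Instead it uses the $A$-computable enumeration $(W_s)$ and the settling-time function of $W$ (which is computable from $W \oplus A$) to \emph{bound the search}: at each stage one looks for an extension in $V_e$ only among strings enumerated by a stage determined by when $W$ settles on an appropriate initial segment. The non-$A$-computability of $W$ guarantees that these bounds are not dominated by any $A$-computable function, so each requirement that can be met is eventually met, while requirements with no extension in $V_e$ below the current string are vacuously satisfied. This is a genuine (if standard) priority-style argument, not a direct forcing, and it is the missing ingredient; without citing or reconstructing it, your proof is incomplete at exactly the point you identified.
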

\begin{proof}
  Let $\mathcal P$ be a $\Pi^0_1(A)$ class consisting entirely of $MLR^A$ reals.
  Let $X \in \cS \cap \mathcal P$, and suppose that $X$ is not $W2R^A$.  By the
  relativization of Theorem \ref{hm} to $A$, there is a set $D\leq_T X\oplus A$
  such that $D$ is c.e. in $A$ and $D \not\leq_T A$.  
Relativizing the proof that every 
non-computable c.e. set computes a 1-generic, 
there is a $G \leq_T D \oplus A$ such that $G$ is 1-generic
  relative to $A$.
\end{proof}
Because $\mathcal P$ has positive measure, Theorem \ref{thm3} 
and all the other results here hold even if
$\cS$ is only an $\omega$-model of $\WWKL$.
If every element of $\cS$ is $\Delta^0_2$, then no element is $W2R$,
and if every element of $\cS$ is hyperimmune-free, then no element is
1-generic, so the result is sharp.

\begin{cor}\label{thm4}
  For any Scott set $\cS$ and any non-computable
  $A_1,\dots, A_n \in \cS$, there is $X \in \cS$
  such that $X$ is Turing incomparable with each $A_i$.
\end{cor}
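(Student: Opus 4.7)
The plan is to apply Theorem~\ref{thm3} to the join $A = A_1 \oplus \cdots \oplus A_n$, which belongs to $\cS$ by closure under Turing joins. This yields $X \in \cS$ that is either weakly 2-random relative to $A$ or 1-generic relative to $A$. Since $A_i \leq_T A$ for each $i$, and both notions only weaken when the oracle is replaced by a Turing-smaller one, $X$ is weakly 2-random relative to $A_i$ (respectively, 1-generic relative to $A_i$) for every $i$. It therefore suffices to show that if $X$ is weakly 2-random relative to $A_i$, or 1-generic relative to $A_i$, and $A_i$ is non-computable, then $X$ and $A_i$ are Turing incomparable.

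That $X \not\leq_T A_i$ is standard in both cases: weak 2-randomness relative to $A_i$ implies Martin-L\"of randomness relative to $A_i$, hence non-computability from $A_i$; and 1-genericity relative to $A_i$ forbids computability from $A_i$ by the usual forcing argument flipping any purported computation of $X$ from $A_i$.

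For $A_i \not\leq_T X$ I would treat the two cases separately. In the weakly 2-random case, I would invoke Sacks' measure theorem: when $A_i$ is non-computable, for each index $e$ the class $\{Y : \Phi_e^Y = A_i\}$ is $\Pi^0_2(A_i)$ and of measure zero, and $X$, being weakly 2-random relative to $A_i$, avoids every $\Pi^0_2(A_i)$ null class. In the 1-generic case, I would argue directly: if $\Phi^X = A_i$ for some functional $\Phi$, the set $W$ of strings $\sigma$ with $\Phi^\sigma(n)\downarrow \neq A_i(n)$ for some $n$ is c.e.\ in $A_i$ and is disjoint from the initial segments of $X$. 1-genericity relative to $A_i$ then produces an initial segment $\sigma$ of $X$ with no extension in $W$, and for any $n$ a computable search for an extension $\tau$ of $\sigma$ with $\Phi^\tau(n)\downarrow$ terminates (by the use principle applied to $X$) and must give $\Phi^\tau(n) = A_i(n)$; this makes $A_i$ computable, a contradiction.

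The main obstacle is the $A_i \not\leq_T X$ direction. The role of passing to the join $A$ at the outset is to let a single application of Theorem~\ref{thm3} produce one $X$ that works simultaneously for every $A_i$.
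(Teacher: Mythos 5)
Your proposal is correct and follows essentially the same route as the paper: apply Theorem~\ref{thm3} to $A = \bigoplus_i A_i$, note that both 1-genericity and weak 2-randomness relativize downward to each $A_i$, and in the weakly 2-random case rule out $A_i \leq_T X$ via Sacks' theorem (your decomposition into the null $\Pi^0_2(A_i)$ classes $\{Y : \Phi_e^Y = A_i\}$ is just the paper's single null $\Sigma^0_3(A_i)$ class $\{Y : Y \geq_T A_i\}$ written piecewise). The paper leaves the standard 1-generic incomparability facts and the $X \not\leq_T A_i$ direction implicit, which you spell out correctly.
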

\begin{proof}
  Let $A = \bigoplus_{i\leq n} A_i$.
  If some $X \in \cS$ is 1-generic relative to $A$,
  then $X$ is 1-generic relative to each $A_i$, which suffices.
  So suppose some $X\in \cS$ is $W2R^A$. Then for each $i$, 
$X \not\geq_T A_i$, because $\{ Y : Y \geq_T A_i\}$ 
is a $\Sigma^0_3(A_i)$ set of measure 0.
\end{proof}

The same method gives a more general extension of embeddings result, 
entirely analogous to the extension of embeddings result that holds 
in $\mathcal D$.  The only addition is the observation that 
a weakly 2-random can play the same role as a 1-generic in the 
usual proof of this theorem in $\mathcal D$.

If $\cH = (H,\leq_H), \cM = (M,\leq_M)$ are partial orders, 
then $\cH \prec \cM$ means that $H\subseteq M$ and $\leq_H$ and $\leq_M$ 
agree on elements of $H$.  We say that 
$\cM$ is an \emph{end-extension} of $\cH$ if for every $h \in H$ and 
$m \in M \setminus H$, we have $m \not\leq_M h$.  If $\cH$ is an 
upper semi-lattice with join $\vee_H$, we say that $\cM$ 
\emph{respects the joins of $\cH$} if for every $h,k \in H$ and 
$m \in M\setminus H$, if $m \geq_M h$ and $m\geq_M k$, then 
$m \geq_M h \vee_H k$.

\begin{thm}\label{thm5}
Let $\cH \prec \cM$ be finite partial orders, where 
$\cH$ is an upper semi-lattice.  
Let $\cS$ be a Scott set, and let
$f:H\rightarrow \cS$ be an upper semi-lattice embedding of 
$\cH$ into $(\cS, \leq_T, \oplus)$.
Suppose $\cM$ is an end-extension of $\cH$ 
which respects the joins of $\cH$.  Then $f$ can be extended to  
$\hat f:M\rightarrow \cS$ which is a partial order embedding
of $\cM$ into $(\cS, \leq_T)$.
\end{thm}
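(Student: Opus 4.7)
The plan is to mimic the classical proof of the extension of embeddings theorem in $\mathcal D$ (e.g.\ \cite[II.4.11]{lerman}), substituting the generic or weakly 2-random objects supplied by Theorem \ref{thm3} for the Cohen generics used there.

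Enumerate $M\setminus H=\{m_1,\ldots,m_k\}$ in an order compatible with $\leq_M$. For each $i$, set $a_i=\bigvee_H\{h\in H:h\leq_M m_i\}$; because $\cM$ end-extends $\cH$ and respects its joins, $a_i\leq_M m_i$, and for $h\in H$ we have $h\leq_M m_i$ iff $h\leq_H a_i$. Let $A=\bigoplus_{h\in H}f(h)\in\cS$, and apply Theorem \ref{thm3} to obtain some $X\in\cS$ which is either $W2R^A$ or 1-generic relative to $A$. Split $X$ into $k$ columns $G_1,\ldots,G_k\in\cS$, and extend $f$ by declaring
\[
\hat f(m_i)=f(a_i)\oplus\bigoplus_{j\,:\,m_j\leq_M m_i}G_j\in\cS.
\]
The positive direction, $\hat f(x)\leq_T\hat f(y)$ whenever $x\leq_M y$, is immediate from the definitions.

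The negative direction rests on the key fact that if $C\not\leq_T B$ and $\Gamma$ is 1-generic or $W2R$ over $B\oplus C$, then $C\not\leq_T B\oplus\Gamma$. For 1-generic $\Gamma$, this is the splitting-string argument already used in Theorem \ref{thm3}; for $W2R$, the class $\{Y:\Phi_e^{B\oplus Y}=C\}$ is $\Pi^0_2(B\oplus C)$ and, by Sacks' measure-theoretic theorem on upper cones, is null whenever $C\not\leq_T B$, so $\Gamma$ avoids it. Combined with $f(h)\not\leq_T f(a_j)$ whenever $h\not\leq_H a_j$ (since $f$ is a semi-lattice embedding), applying this lemma with $B$ and $C$ drawn from $\{f(a_j),f(h)\}$ or from the $G_\ell$'s yields all the required failures $\hat f(x)\not\leq_T\hat f(y)$ when $x\not\leq_M y$.

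The main obstacle I anticipate is establishing the mutual independence of the $G_j$'s needed to invoke the key fact: each $G_j$ should remain 1-generic (resp.\ $W2R$) over $A$ joined with any subset of the remaining columns. This is van Lambalgen's theorem for 1-genericity, and the analogous (also known, though slightly more delicate) statement for weak 2-randomness. Should one prefer to avoid the $W2R$ van Lambalgen, one may instead build $G_1,\ldots,G_k\in\cS$ iteratively by repeatedly invoking Theorem \ref{thm3} relative to progressively larger oracles, and then apply van Lambalgen's symmetry only in its better-documented directions.
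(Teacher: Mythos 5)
Your construction is essentially the paper's: the same splitting of a single $G$ from Theorem \ref{thm3} into columns, the same definition of $\hat f(m_i)$ (your $f(a_i)$ is Turing-equivalent to the join $\bigoplus_{h\leq_M m_i}f(h)$ used there), and the same measure-theoretic cone avoidance in the weakly 2-random case. The one substantive divergence is the ``main obstacle'' you flag at the end, and there you are demanding more independence than the verification uses. The statement you want --- each $G_j$ weakly 2-random over $A$ joined with the remaining columns --- is the decomposition direction of van Lambalgen for weak 2-randomness; that is exactly the direction that is delicate for this notion, and I would not call it ``known'' without a precise reference. Fortunately it is never needed. Weak 2-randomness is invoked only to defeat reductions $f(h)\leq_T\hat f(m_i)$ with $h\not\leq_M m_i$, and there the class $\mathcal B=\{Z: f(a_i)\oplus Z\geq_T f(h)\}$ is $\Sigma^0_3(A)$ and null by the relativized Sacks theorem (your ``majority vote''), with the other columns absorbed into the variable $Z$: since $\mathcal B$ is upward closed under $\leq_T$ and $G\notin\mathcal B$ because $G$ itself is $W2R^A$, no sub-join of $G$ lies in $\mathcal B$ either, and no randomness of one column relative to another is used. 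Every remaining non-reduction only requires that $G_i$ not be \emph{computable} from $A\oplus\bigoplus_{\ell\neq i}G_\ell$, and for that van Lambalgen for ML-randomness suffices, applicable because $W2R^A\subseteq MLR^A$. So the anticipated obstacle dissolves, and with it the iterative fallback --- which would in any case need extra care, since Theorem \ref{thm3} does not let you choose which of its two outcomes you receive at each stage, forcing a mixed case analysis through the induction. With that one simplification your argument coincides with the paper's proof.
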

\begin{proof}  For $h \in H$, let $A_h = f(h)$.
Let $G \in \mathcal S$ be either 1-generic or weakly 2-random relative to 
$\bigoplus_{h \in H} A_h$.  Interpret $G$ as the join of finitely many 
columns, one for each $m \in M \setminus H$, so that the reals $G_m$ 
are defined by $G = \bigoplus_{m \in M\setminus H} G_m$. 
For each $m \in M \setminus H$, define 
$$A_m = \bigoplus_{\substack{h \in H \\ h \leq_M m}} A_h 
\oplus \bigoplus_{\substack{\ell \in M\setminus H \\ \ell \leq_M m}} G_\ell.$$
We claim that $\hat f(m) := A_m$ satisfies 
the conclusion of the theorem.  If $G$ is 1-generic, 
we have just repeated the usual construction.  If $G$ is 
weakly 2-random, there is only one verification step that is different:
we claim that if $h \in H$ and $m \in M\setminus H$ with $h \not\leq_M m$, 
then $A_h \not\leq A_m$.  Let 
$$\mathcal B = \{Z : \bigoplus_{\substack{k \in H\\ k \leq_M m}}A_k \oplus Z \geq_T A_h\}$$
If $\mathcal B$ had positive measure, 
a majority vote argument would provide a computation to show
$\bigoplus_{\substack{k \in H\\ k \leq_M m}}A_k \geq_T A_h$.  This is not 
possible because $m\not\geq_M h$ and $\cM$ respects the joins of $\cH$.  
Therefore $\mathcal B$ is null, and it 
is $\Sigma^0_3(\bigoplus_{k \in H} A_k)$, so 
$G \not\in \mathcal B$, so $A_m \not\in \mathcal B$.

All the other steps in the verification are the same after the observation 
that because $G$ is $MLR^{\bigoplus_{k\in H} A_k}$, we have
$\bigoplus_{h\in H} A_h \oplus \bigoplus_{\ell \neq m} G_\ell \not\geq_T G_m$ 
for all $m \in M \setminus H$.
\end{proof}

We can now give a partial answer to the question asked in \cite{kuceraslaman} 
of whether the $\forall\exists$ theory (in the language of partial orders)
of the degrees of a Scott set 
coincides with the $\forall\exists$ theory of $\mathcal D$. 

\begin{cor}\label{cor6}
For any Scott set $\cS$,
any $\forall\exists$ sentence that holds in 
$\mathcal D$ also holds in $\cDS$.
\end{cor}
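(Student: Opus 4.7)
Let $\sigma = \forall \vec{x}\, \exists \vec{y}\, \phi(\vec{x}, \vec{y})$ be a $\forall\exists$ sentence in the language of partial orders that holds in $\mathcal D$, and fix a tuple $\vec{a} = (a_1,\ldots,a_n)$ of degrees in $\cDS$. The goal is to produce $\vec{b}$ in $\cDS$ with $\phi(\vec{a},\vec{b})$, and the plan is to reduce the problem to a purely combinatorial condition on finite partial orders and then apply Theorem \ref{thm5}.

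First I would form the finite upper semi-lattice $\cH$ generated by $\vec{a}$ under $\oplus$; because $\cS$ is closed under join, the natural inclusion $f:\cH \to \cDS$ is an upper semi-lattice embedding into $(\cS,\leq_T,\oplus)$, and the same $\cH$ is the upper semi-lattice generated by $\vec{a}$ inside $\mathcal D$. Next I would invoke the standard extraction argument underlying the decidability of the $\forall\exists$ theory of $\mathcal D$ (as in Lerman): from the truth of $\sigma$ in $\mathcal D$ one obtains a finite partial order $\cM$ that end-extends $\cH$ and respects the joins of $\cH$, together with a tuple $\vec{y}$ of vertices of $\cM$, such that $\phi(\vec{x},\vec{y})$ holds in $\cM$ when $\vec{x}$ is identified with the distinguished generators of $\cH$. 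With $(\cM,\vec{y})$ in hand, Theorem \ref{thm5} extends $f$ to a partial-order embedding $\hat f:\cM \to \cDS$, and setting $\vec{b} := \hat f(\vec{y})$ delivers $\phi(\vec{a},\vec{b})$ in $\cDS$ since $\hat f$ preserves the order and $\phi$ is quantifier-free.

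The main obstacle is the extraction of $(\cM,\vec{y})$ from the assumption that $\sigma$ holds in $\mathcal D$. The ``respects joins'' condition is automatic: any structure realized inside $\mathcal D$ satisfies it, since $z\geq_T h_1$ and $z\geq_T h_2$ together imply $z\geq_T h_1 \oplus h_2$. The delicate point is arranging that $\cM$ is a genuine \emph{end-extension} of $\cH$, i.e.\ that no vertex of $\cM \setminus H$ sits below any element of $\cH$. This is the combinatorial content of the relevant direction of Lerman's decision procedure: a witness $\vec{b}^\star \in \mathcal D$ supplied by $\sigma$ must be massaged, using extension of embeddings inside $\mathcal D$, into a configuration compatible with the end-extension hypothesis of Theorem \ref{thm5}. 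Once this reduction is accepted, Theorem \ref{thm5} carries the conclusion through from $\mathcal D$ to $\cDS$.
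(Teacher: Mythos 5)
Your overall route is the same as the paper's: reduce the given $\forall\exists$ sentence to the realization of a finite end-extension of the upper semi-lattice $\cH$ generated by $\vec a$ that respects its joins, and then let Theorem \ref{thm5} realize that configuration inside $\cS$. The final application of Theorem \ref{thm5} is fine (a partial order embedding preserves and reflects atomic formulas, hence quantifier-free ones), and so is the observation that ``respects joins'' is automatic for configurations realized in $\mathcal D$ when $\vee_H$ is interpreted as $\oplus$.

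The problem is the step you yourself flag as the main obstacle: producing an $\cM$ that is an \emph{end-extension} of $\cH$ with $\cM\models\phi(\vec x,\vec y)$ from the truth of $\sigma$ in $\mathcal D$. Your proposed mechanism --- take a witness $\vec b^{\star}$ supplied by $\sigma$ in $\mathcal D$ and ``massage'' it into end-extension position ``using extension of embeddings inside $\mathcal D$'' --- does not work and points at the wrong tool. A particular witness may genuinely sit below elements of $\cH$, and extension of embeddings only produces new elements that are \emph{not} below old ones, so it cannot relocate such a witness; invoking it here is also circular, since extension of embeddings is precisely what Theorem \ref{thm5} generalizes. What actually closes the gap, and what the paper does, is to quote two precise facts: Lerman's normal form theorem (\cite[Theorem VII.4.4]{lerman}), which replaces $\sigma$ by a conjunction of extension sentences $\forall\bar x\exists\bar y(\theta\rightarrow\bigvee_{j}\eta_j)$ in which $\theta$ and each $\eta_j$ are \emph{complete} diagrams, and the characterization (\cite[Theorems II.4.11, VII.4.1]{lerman}) that such a sentence holds in $\mathcal D$ if and only if some single disjunct $\eta_j$ already describes an end-extension of $\cH$ respecting its joins. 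The hard direction of that characterization is proved by contraposition via initial-segment/exact-pair constructions (building a realization of $\theta$ in $\mathcal D$ admitting no witness at all if no $\eta_j$ is of the required form), not by transforming a given witness. So your outline is the right one, but its single substantive step must be replaced by these citations or their proofs; as written, the argument is vague exactly where the content lies.
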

\begin{proof}
An \emph{extension sentence} is a sentence 
of the form $\forall \bar x \exists \bar y 
(\theta(\bar x) \rightarrow \bigvee_{j<n} \eta_j(\bar x, \bar y))$, 
where $\theta(\bar x)$ and each $\eta_j(\bar x , \bar y)$ are
conjunctions of atomic formulas and negated atomic formulas 
describing the complete $\leq$-diagram on their inputs, where
the diagram described by 
$\theta$ is an upper semi-lattice, and where each $\eta_j$ 
describes a diagram extending the diagram given by $\theta$.
It is known (see \cite[Theorem VII.4.4]{lerman}) that for any
$\forall\exists$ sentence $\psi$, there is a finite conjunction of 
extension sentences $\phi$, such that in any upper semi-lattice, 
$\psi$ holds if and only if all the conjuncts hold.
Furthermore, in $\mathcal D$, an extension sentence 
$\phi$ holds if and only if there is a $j$ such that the diagram $\cM_j$
described by $\eta_j$ is an end extension of the diagram $\cH$
described by $\theta$, and $\cM$ respects the joins of $\cH$ 
(see \cite[Theorems II.4.11, VII.4.1]{lerman}).

Applying Theorem \ref{thm5}, 
if an extension sentence $\phi$ holds in $\mathcal D$, 
then $\phi$ also holds in $\cDS$.  
Therefore, if $\psi$ holds in $\mathcal D$, then all the conjuncts 
$\phi$ hold in both structures,
so $\psi$ holds in $\cDS$ as well.
\end{proof}

It remains open whether all Scott sets satisfy exactly these $\forall\exists$ 
sentences.

\bibliographystyle{plain}
\bibliography{scottbib.bib}

\end{document}